\def\sumstar{\operatornamewithlimits{\sum\nolimits^*}}
\theoremstyle{plain}
\newtheorem{theorem}{Theorem}
\newtheorem{corollary}[theorem]{Corollary}
\newtheorem{remark}[theorem]{Remark}
\newtheorem{lemma}[theorem]{Lemma}
\theoremstyle{definition}
\theoremstyle{remark}
\numberwithin{equation}{section}
\begin{document}
\title{The mean-square of Dirichlet L-functions}
\newcommand{\sumprime}{\operatornamewithlimits{\sum\nolimits^\prime}}
\newcommand{\Res}{\operatornamewithlimits{\text{Res}}}
\newcommand{\sumeven}{\operatornamewithlimits{\sum\nolimits^e}}
\newcommand{\sumodd}{\operatornamewithlimits{\sum\nolimits^o}}

\author {J.B. Conrey}
\address{American Institute of Mathematics,
360 Portage Ave, Palo Alto, CA 94306} \address{School of
Mathematics, University of Bristol, Bristol, BS8 1TW, United
Kingdom} \email{conrey@aimath.org}

\thanks{
Research  supported by the American Institute
of Mathematics and a Focused Research Group grant from the
National Science Foundation.    This paper was started when the author was
visiting the Isaac Newton Institute for Mathematical Sciences for
the programme ``Random matrix approaches  in number theory".
 }
\maketitle

\section{Introduction}

In this note we find an asymptotic formula for the mean-square of
primitive Dirichlet L-functions near 1/2:
$$S_q(\alpha,\beta):=\sumstar_{\chi\bmod q} L(1/2+\alpha,\chi)L(1/2+\beta,\overline{\chi})$$
where $\alpha $ and $\beta$ are small complex numbers satisfying
$\alpha,\beta \ll 1/\log q$.

Ingham [Ing] considered an analogous moment for the Riemann zeta-function on the critical line
with small shifts. Paley [Pal]  considered 
the moment above for Dirichlet $L$-functions.  
Heath-Brown [HB] has computed a similar moment, but for all characters modulo $q$, in the case that
$\alpha=\beta=0$. His result is
\begin{theorem}[HB]There are constants $c_\ell$ such that
\begin{eqnarray*}
\sum_{\chi \bmod q} |L(1/2,\chi)|^2=\frac{\phi(q)}{q}\sum_{k\mid q} \mu(q/k)T(k),
\end{eqnarray*}
with
\begin{eqnarray*}
T(k)=k\log k +A k +B\sqrt{k}+
\sum_{\ell=0}^{2L-1}c_\ell k^{-\ell/2}
+O(k^{-L})
\end{eqnarray*}
where
\begin{eqnarray*}
A=\gamma -\log 8\pi = 2\gamma-\log  2\pi+\frac{\Gamma'}{\Gamma}(1/2) \qquad \mbox{ and } \qquad B= 2\zeta(1/2)^2;
\end{eqnarray*}
 $L$ is any positive integer.
\end{theorem}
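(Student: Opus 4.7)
The plan is to open $|L(1/2,\chi)|^2$ as a smoothed Dirichlet series via a Mellin representation
$$|L(1/2,\chi)|^2 = \frac{1}{2\pi i}\int_{(c)} L(1/2+s,\chi)\,L(1/2+s,\overline{\chi})\, G(s)\,\frac{ds}{s} + (\text{dual piece}),$$
where $G$ is rapidly decaying with $G(0)=1$, chosen so that the Dirichlet series converges absolutely before the $\chi$-sum is taken. Applying the orthogonality
$$\sum_{\chi \bmod q} \chi(m)\overline{\chi}(n) = \phi(q)\,\mathbf{1}\{m \equiv n \pmod q,\ (m,q)=1\}$$
then collapses the character sum into an arithmetic expression of the shape
$$\phi(q) \sum_{\substack{m \equiv n\,(q)\\ (m,q)=1}} \frac{W(mn/q^2)}{\sqrt{mn}}$$
for an explicit weight $W$ built from $G$ and the gamma factors of the functional equation.

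Next I would clear the coprimality constraint via $\mathbf{1}\{(m,q)=1\} = \sum_{d\mid (m,q)}\mu(d)$. Because $m\equiv n\pmod q$ and $d\mid q$, the condition $d\mid m$ forces $d\mid n$; writing $m=dm_1$, $n=dn_1$ turns the inner constraint into $m_1\equiv n_1\pmod{q/d}$, and reindexing with $k=q/d$ produces precisely
$$\sum_{\chi\bmod q} |L(1/2,\chi)|^2 = \frac{\phi(q)}{q} \sum_{k\mid q} \mu(q/k)\,T(k),$$
where $T(k)$ is the analogous double sum to modulus $k$ with no coprimality condition. This is really just a clean repackaging of the fact that imprimitive characters of modulus $q$ factor through primitive characters of divisor conductor.

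The remaining task is the asymptotic expansion of $T(k)$. Split the inner sum by $m=n$ versus $m\neq n$. The diagonal reduces to a smoothed harmonic sum $k\sum_m W(m^2/k^2)/m$; Mellin inversion against the double pole of $\zeta(s)^2$ at $s=1$ yields the leading term $k\log k + Ak$, with the constant $A=\gamma-\log 8\pi$ emerging from the Laurent expansion of the gamma-weight at $s=1/2$. The off-diagonal part $m-n=hk$, $h\neq 0$, is a shifted convolution which I would handle by Poisson summation in one variable (equivalently, by applying the functional equation to an inner zeta-type Dirichlet series); the transformed expression carries a residue of size $\sqrt{k}$ at the symmetric point $s=1/2$, where two copies of $\zeta$ meet to give $\zeta(1/2)^2$, accounting for $B=2\zeta(1/2)^2$. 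Shifting the contour past $s=-\ell/2$ for $\ell=0,1,\dots,2L-1$ then collects the further residues producing $c_\ell k^{-\ell/2}$ and leaves error $O(k^{-L})$.

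The hard part will be the off-diagonal analysis: one needs a \emph{uniform} asymptotic expansion of a shifted-divisor sum, of Estermann type $\sum_n d(n)d(n+hk)/\sqrt{n(n+hk)}$ against the smooth cutoff, to arbitrary precision in $k^{-1/2}$, and the $h$-sum must be controlled so that subleading residues really combine into a genuine $O(k^{-L})$ tail. Tracking all these residues and verifying uniformity in the shift parameter $h$ is where the bulk of the technical effort lies.
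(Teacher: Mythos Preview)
Your reduction to $\frac{\phi(q)}{q}\sum_{k\mid q}\mu(q/k)T(k)$ via orthogonality and M\"obius on the coprimality condition is correct and matches Heath-Brown. The divergence is in what $T(k)$ is and how it is analyzed.

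In your setup $T(k)$ still carries the congruence $m\equiv n\pmod k$, which forces a diagonal/off-diagonal split and an Estermann-type shifted-convolution problem. Heath-Brown (and the paper's Section~3) avoid this entirely by applying the functional equation to one $L$-factor \emph{before} the character sum: write $L(1-s,\overline\chi)=X(1-s,\overline\chi)L(s,\chi)$, open the Gauss sum $\tau(\overline\chi)=\sum_a \overline\chi(a)e(a/q)$, and only then use orthogonality. The character sum is now over $\chi(mn)\overline\chi(a)$ rather than $\chi(m)\overline\chi(n)$, giving $mn\equiv a$; the $a$-sum then deposits $e(mn/q)$ into the smooth kernel. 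After the same M\"obius step $T(k)$ becomes a \emph{free} divisor sum,
\[
T(k)=4\sqrt{\tfrac{k}{2\pi}}\,\Re\sum_{n\ge 1}\frac{d(n)}{\sqrt{n}}\,K\Bigl(\frac{2\pi n}{k}\Bigr)
=4\,\Re\,\frac{1}{2\pi i}\int_{(1)}\hat K(s)\,\zeta(s+\tfrac12)^2\Bigl(\frac{k}{2\pi}\Bigr)^{s+1/2}ds,
\]
and the whole asymptotic is a single contour shift: the double pole of $\zeta(s+\tfrac12)^2$ at $s=\tfrac12$ gives $k\log k+Ak$; the simple poles of $\hat K$ at $s=0,-1,-2,\dots$ give $B\sqrt{k}=2\zeta(\tfrac12)^2\sqrt{k}$ and all of the $c_\ell$ in one stroke (indeed, since $\hat K$ has poles only at nonpositive integers, half of the $c_\ell$ vanish).

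Your route is not wrong in outline, but the ``hard part'' you flag is exactly what the functional-equation-first arrangement sidesteps. Extracting a full asymptotic expansion to $O(k^{-L})$ for \emph{every} $L$ from the off-diagonal shifted convolutions, with the $h$-sum controlled uniformly, is a much heavier undertaking than the contour shift above, and your sketch does not make clear where $\zeta(\tfrac12)^2$ or the tower of lower-order constants would emerge cleanly in that framework.
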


Note that there are main terms of size $k\log k$ for all divisors $k$ of $q$. By contrast, the conjectures of [CFKRS]
predict a very specific main term, with a square root size error-term, for any family of $L$-functions.
The recipe given in that paper leads to the prediction, for example, that
\begin{eqnarray*}
&&\sumstar_{\chi\bmod q\atop \chi(-1)=1}
L(1/2+\alpha,\chi)L(1/2+\beta,\overline{\chi})\\
&& \qquad =\frac{\phi^*(q)}{2}(\zeta_q(1+\alpha+\beta)   +
X^+(q,\alpha,\beta)\zeta_q(1-\alpha-\beta))+O(q^{1/2+\epsilon}).
  \end{eqnarray*}
Here $X^+$ is related to the factor from the functional equation given below in (\ref{eq:X}), $\phi^*(q)/2+O(1)$ is the number
of primitive even characters modulo $q$ and
\begin{equation*}
\zeta_q(s)=\sum_{n=1\atop (n,q)=1}^\infty \frac{1}{n^s}=\prod_{p\mid q}\left(1-\frac{1}{p^s}\right)\zeta(s).
\end{equation*}
 Theorem 5 proves that this prediction is correct.
In particular, there are no terms between those of size $\phi^*(q)$ and the error-term
of size $q^{1/2+\epsilon}$ in contrast to Heath-Brown's result. The point is that it is the primitive characters
which form a family in the sense defined in [CFKRS], and the set of all characters does not.

In the case that $q$ is prime, the set of all characters consists of the primitive characters with just one
additional non-primitive character, namely the trivial character.
In this case, Heath-Brown's result gives
an asymptotic series in powers of $q^{-1/2}$ for the above sum.
An interesting feature is that there is a main-term of size $q^{1/2+\epsilon}$. Such a main-term could
never be obtained from
the random matrix considerations in [CFKRS].
We find that this term of size $q^{1/2}$ is present for composite $q$ as well; see the statement of Theorem 5.

Heath-Brown compares the behavior of the mean-square over all $L$-functions with that of the
Riemann zeta function. He remarks that the above sum is more like
the smoothly weighted
$$\int _0^\infty |\zeta(1/2+it)|^2 e^{-\delta t}~dt,$$
for which one obtains an asymptotic series in powers of $\delta$,
than it is like
$$\int_0^T |\zeta(1/2+it)|^2~dt$$
for which one expects an error term of size $T^{1/4+\epsilon}$.
In this note we find that upon considering all $q$ that the behavior does in fact have elements
in common with the
second mean-value. In particular, for certain $q$, namely $q=p_1 p_2$ with $p_1$ and $p_2$ primes near
the square-root of $q$, we obtain a formula with an error term
of size $q^{1/4}$. For other $q$, the error term can be slightly
larger; see Corollary 6.

Finally, we develop an asymptotic formula for
$$S(p,h) := \sumstar_{\chi \bmod p}|L(1/2,\chi)|^2\chi(h)$$
in the case that both $h$ and $p$ are primes with $0<h<p$; see Theorem 10.
This formula is more accurate than in any previous work; also, it reveals a
connection with $S(h,-p)$.

The author would like to thank Matthew Young for helpful conversations about this work.
\section{Initial considerations}

Our treatment at the start closely follows the analysis of [HB].
It is convenient to split our sum
into a sum over even characters and a sum over odd characters. To
this end let
$$S_q^+(s,\alpha,\beta):=  \sumstar_{\chi\bmod q\atop \chi(-1)=1}
L(s+\alpha,\chi)L(1-s+\beta,\overline{\chi})$$
where the sum is over primitive characters modulo $q$. Then we wish to evaluate
$S_q^+(1/2,\alpha, \beta)$. We assume that
$q>1$ so that $S_q^+(s,\alpha, \beta)$ is an entire function of
$s$. Initially, we regard $s$ as a complex number with $\Re s>2$;
the formula we eventually develop will hold for arbitrary $s$
whereupon we will we take $s=1/2$. Note, for future reference,
that
$$S_q^+(1-s,\alpha,\beta)=S_q^+(s,\beta,\alpha).$$

\medskip

\begin{lemma}[Functional equation] If $\chi$ is a primitive character
modulo $q$, then
$$L(1-s,\overline{\chi})=X(1-s,\overline{\chi})L(s,\chi)$$
where
\begin{equation} \label{eq:X} X(1-s,\overline{\chi})=\tau(\overline{\chi}) q^{s-1}(2\pi)^{-s}\Gamma(s)
\big( e^{-\frac{\pi i s}{2}}+\chi(-1)e^{\frac{\pi i s}{2}}\big).
\end{equation}
\end{lemma}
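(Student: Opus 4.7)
The plan is to derive this asymmetric (Riemann-style) functional equation from the more familiar symmetric form, handling the even and odd cases separately and then showing they unify into the single displayed formula via the factor $e^{-\pi i s/2}+\chi(-1)e^{\pi i s/2}$.

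First, I would recall the symmetric functional equation in each parity. Set $a=0$ if $\chi(-1)=1$ and $a=1$ if $\chi(-1)=-1$, and define
\begin{equation*}
\xi(s,\chi) = \left(\frac{q}{\pi}\right)^{(s+a)/2} \Gamma\!\left(\frac{s+a}{2}\right) L(s,\chi).
\end{equation*}
The classical derivation via theta-series and Poisson summation (for $\theta_\chi(x)=\sum_n \chi(n) n^a e^{-\pi n^2 x/q}$) yields
\begin{equation*}
\xi(s,\chi) = \varepsilon(\chi)\, \xi(1-s,\overline{\chi}), \qquad \varepsilon(\chi)=\frac{\tau(\chi)}{i^a\sqrt{q}}.
\end{equation*}
Solving for $L(1-s,\overline{\chi})$ gives
\begin{equation*}
L(1-s,\overline{\chi}) = \frac{i^a\sqrt{q}}{\tau(\chi)}\left(\frac{q}{\pi}\right)^{s-1/2} \frac{\Gamma((s+a)/2)}{\Gamma((1-s+a)/2)}\, L(s,\chi).
\end{equation*}
Using $\tau(\chi)\tau(\overline{\chi})=\chi(-1)q=(-1)^a q$, the prefactor simplifies to $\tau(\overline{\chi}) i^{-a} q^{s-1}\pi^{1/2-s}$.

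Next, I would convert the ratio of Gamma factors into the elementary trigonometric shape. Legendre's duplication formula gives $\Gamma(s/2)\Gamma((s+1)/2)=2^{1-s}\sqrt{\pi}\,\Gamma(s)$, and the reflection formula gives $\Gamma((1-s+a)/2)\,\Gamma((1+s-a)/2+\delta)=\pi/\sin(\cdot)$ in the appropriate form. Concretely, for $a=0$ one has
\begin{equation*}
\frac{\Gamma(s/2)}{\Gamma((1-s)/2)} = \frac{\Gamma(s/2)\Gamma((1+s)/2)\cos(\pi s/2)}{\pi} = \frac{2^{1-s}\Gamma(s)\cos(\pi s/2)}{\sqrt{\pi}},
\end{equation*}
while for $a=1$ one gets $\Gamma((s+1)/2)/\Gamma(1-s/2) = 2^{1-s}\pi^{-1/2}\Gamma(s)\sin(\pi s/2)$. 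Substituting back in each case and collecting constants produces
\begin{equation*}
L(1-s,\overline{\chi}) = \tau(\overline{\chi})\,q^{s-1}(2\pi)^{-s}\Gamma(s)\cdot \begin{cases} 2\cos(\pi s/2), & a=0,\\ -2i\sin(\pi s/2), & a=1.\end{cases}
\end{equation*}

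Finally, I would observe that $2\cos(\pi s/2)=e^{-\pi i s/2}+e^{\pi i s/2}$ and $-2i\sin(\pi s/2)=e^{-\pi i s/2}-e^{\pi i s/2}$, so the two parity cases collapse uniformly into $e^{-\pi i s/2}+\chi(-1)e^{\pi i s/2}$, matching (\ref{eq:X}) exactly. The principal obstacle is not conceptual but bookkeeping: carefully tracking the powers of $i$ from $\varepsilon(\chi)$ together with the sign $\chi(-1)=(-1)^a$ in $\tau(\chi)\tau(\overline{\chi})$, and ensuring the Gamma-function manipulations (duplication plus reflection) produce the trigonometric factor with the correct sign in each parity. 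Once those signs are verified, the two cases merge into the single stated identity.
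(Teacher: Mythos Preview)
Your derivation is correct: starting from the symmetric completed functional equation, converting $1/\tau(\chi)$ to $\tau(\overline{\chi})$ via $\tau(\chi)\tau(\overline{\chi})=\chi(-1)q$, and then applying duplication plus reflection to the Gamma ratio yields exactly the stated $X(1-s,\overline{\chi})$, with the two parity cases merging into $e^{-\pi i s/2}+\chi(-1)e^{\pi i s/2}$ as you note. The paper itself does not supply a proof of this lemma; it simply cites Davenport~[D] as a reference and records the observation (which you also make) that the final bracket equals $2\cos\tfrac{\pi s}{2}$ for even $\chi$ and $-2i\sin\tfrac{\pi s}{2}$ for odd $\chi$. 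Your argument is the standard one found in Davenport, so in substance your proposal and the paper's cited source agree.
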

As usual,   $\tau$   denotes the Gauss sum
$$\tau(\chi)=\sum_{a=1}^q \chi(a) e(a/q)$$
where $e(x)=e^{2\pi i x}$.  Note that for even characters, the
last factor is $2\cos \tfrac{\pi}{2} s$ and for odd characters it
is $-2i\sin \tfrac{\pi}{2} s$.
Davenport [D] is a good reference for these facts.

Applying this lemma, we find that $S_q^+(s,\alpha, \beta)=$
$$\frac{2\Gamma(s-\beta)\cos \tfrac{\pi}{2} (s-\beta)}{q}  \sumstar_{\chi
\bmod q\atop \chi(-1)=1} \sum_{a=1}^q
\overline{\chi}(a) e\big(\frac aq\big ) \sum_{m,n=1}^\infty
m^{-\alpha-\beta} \chi(mn)   \big(\frac{2\pi mn}
{q}\big)^{-s+\beta}   .$$

\begin{lemma} [Orthogonality relation for primitive characters] If
$(mn,q)=1$, then
$$\sumstar_{\chi \bmod q} \chi(m)\overline{\chi}(n)=\sum_{ d\mid q\atop
d\mid m-n} \phi(d)\mu\big(\frac q d \big).$$
\end{lemma}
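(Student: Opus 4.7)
The plan is to reduce the identity to the classical orthogonality relation over \emph{all} characters modulo $q$ via Möbius inversion on the conductor.

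First I would invoke the standard fact that every Dirichlet character $\chi$ modulo $q$ is induced from a unique primitive character of some conductor $d\mid q$, and that if $(mn,q)=1$ then the induced character agrees with its primitive inducer on $m$ and $n$. This lets me partition the full character sum as
$$\sum_{\chi \bmod q}\chi(m)\overline{\chi}(n) = \sum_{d \mid q}\ \sumstar_{\chi \bmod d}\chi(m)\overline{\chi}(n).$$
Write $g(d)$ for the unstarred sum on the left (with $q$ replaced by $d$) and $f(d)$ for the starred sum on the right. The display above says $g(q)=\sum_{d\mid q}f(d)$, so Möbius inversion gives
$$f(q) = \sum_{d \mid q}\mu(q/d)\,g(d).$$

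Next I would evaluate $g(d)$ by the ordinary orthogonality of characters on $(\mathbb{Z}/d\mathbb{Z})^\times$: since $(mn,d)=1$ is automatic from $(mn,q)=1$, one has $g(d)=\phi(d)$ when $d\mid m-n$ and $g(d)=0$ otherwise. Substituting into the Möbius-inverted formula collapses the sum to exactly $\sum_{d\mid q,\,d\mid m-n}\phi(d)\mu(q/d)$, which is the claimed identity.

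There is no genuine obstacle. The only point requiring care is that the first display really is a partition of the characters modulo $q$, not an over-counting: this is exactly the content of the definition of ``primitive'' (each $\chi$ mod $q$ arises from precisely one conductor $d\mid q$). Everything else is routine.
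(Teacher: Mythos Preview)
Your argument is correct and is the standard proof of this identity: partition the characters modulo $q$ by conductor, use that induced characters agree with their primitive inducers on units, apply M\"obius inversion, and then invoke ordinary orthogonality on $(\mathbb{Z}/d\mathbb{Z})^\times$.

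As for comparison with the paper: the paper does not actually supply a proof of this lemma. It is stated as a known result (with the even-character corollary noted immediately afterward) and then applied directly. So there is no alternative approach in the paper to contrast with yours; your write-up simply fills in the routine justification that the author left implicit.
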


As a corollary, if $(mn,q)=1$, then
$$\sumstar_{\chi \bmod q\atop \chi(-1)=1} \chi(m)\overline{\chi}(n)=
\tfrac 12 \sum_{ d\mid q\atop d\mid m \pm n} \phi(d)\mu\big(\frac
q d \big).$$ Here $d\mid m\pm n$ means that we include terms for
which either $d\mid (m-n)$ or $d\mid (m+n)$.

 Applying Lemma 2 leads to
\begin{eqnarray*}S_q^+(s,\alpha,\beta)&=&  \frac{\Gamma(s-\beta)\cos
\tfrac{\pi}{2} (s-\beta)}{q}
\sum_{a=1\atop (a,q)=1}^q e\big(\frac a{q}\big ) \sum_{cd=q} \mu(c)\phi(d)
 \sum_{mn\equiv \pm a\bmod d  \atop (mn,q)=1 }
m^{-\alpha-\beta} \big(\frac{2\pi mn}{q}\big)^{-s+\beta}
 .\end{eqnarray*}
Now we obtain an analytic continuation for $S_q(s,\alpha,\beta)$.
Let
$$F(s,\alpha,\beta)=\frac{e^{s^2}}{s}\frac{\cos \pi (s+\alpha)}{\cos \pi
\alpha}\frac{\cos \pi
(s-\beta)}{\cos \pi \beta}.$$ Clearly, $F$ is analytic everywhere
apart from a simple pole at $s=0$ with residue 1. Moreover,
$$F(s,\alpha, \beta)= -F(-s,\beta,\alpha).$$ Using this equation
together with the functional equation for $S$ and Cauchy's
theorem, we obtain
\begin{eqnarray*} \frac{1}{2\pi i} \int_{(2)} F(s,\alpha,\beta)
S_q^+(s+1/2,\alpha,\beta) ~ds&=& S_q^+(1/2,\alpha,\beta)\\
\qquad && - \frac{1}{2\pi i} \int_{(2)} F(s,\beta, \alpha)
S_q^+(s+1/2,\beta, \alpha)~ds
\end{eqnarray*}
In other words, $$S_q^+(1/2,\alpha,\beta)
=M(\alpha,\beta)+M(\beta,\alpha)$$ where
$$M(\alpha,\beta)=  \frac{1}{2\pi i} \int_{(2)} F(s,\alpha,\beta)
S_q^+(s+1/2,\alpha,\beta) ~ds. $$

To simplify the notation here let
\begin{eqnarray}\label{eqn:Khat}  K_{\alpha,\beta}(x)&=&\frac{1}{2\pi i} \int_{(2)} F(s,\alpha,
\beta)\Gamma(s+\frac
12-\beta)\cos \tfrac{\pi}{2}(s+\tfrac 12-\beta)
x^{-s-\tfrac{1}{2}+\beta} ~ds\\
&=&\nonumber \frac{x^{\beta-1/2}}{2\pi i}
\int_{(2)} \hat{K}_{\alpha,\beta}(s)x^{-s}~ds
\end{eqnarray}
where
\begin{eqnarray*}
\hat{K}_{\alpha,\beta}(s)=\frac{e^{s^2}}{s}\frac{\cos \pi (s+\alpha)}{\cos \pi
\alpha}\frac{\cos \pi
(s-\beta)}{\cos \pi \beta}\Gamma(s+\frac
12-\beta)\cos \tfrac{\pi}{2}(s+\tfrac 12-\beta).
\end{eqnarray*}
Note that $\hat{K}_{\alpha,\beta}(s)$ has a simple pole at $s=0$ and no other poles.
Hence,  if $x>1$, then by
moving the path of integration to the right we find that
$K_{\alpha, \beta}(x)\ll x^{-N}$ for any $N$. If $x<1,$ then by
moving the path to the left we see that $K_{\alpha,\beta}(x)=
 \Gamma(1/2-\beta)\cos\frac{\pi}{2}(\tfrac
12-\beta) x^{-\tfrac 12 +\beta} +O(x^N)$ for any $N$:
\begin{eqnarray}  \label{eq:K}
K_{\alpha,\beta}(x)=\left\{\begin{array}{ll}O(x^{-N}) &\mbox{ if $x>1$}\\
\Gamma(1/2-\beta)\cos\frac{\pi}{2}(\tfrac
12-\beta) x^{-\tfrac 12 +\beta} +O(x^N) &\mbox{ if $x<1$}
\end{array}\right.
\end{eqnarray}

Integrating term-by-term
\begin{eqnarray*}
M(\alpha,\beta)&=& \frac{1}{q} \sum_{a=1\atop (a,q)=1}^q e\big(\frac
a{q}\big ) \sum_{cd=q} \mu(c)\phi(d)
 \sum_{mn\equiv \pm a\bmod d \atop (mn,q)=1  } m^{-\alpha-\beta}
K_{\alpha,\beta}\big(\frac{2\pi  m n}{q}\big)  \end{eqnarray*}

Bringing the sum over $a$ to the inside gives:

\begin{eqnarray*}
M(\alpha,\beta)&=& \frac{1}{q}   \sum_{cd=q} \mu(c)\phi(d)
 \sum_{(mn,cd)=1}m^{-\alpha-\beta}
K_{\alpha,\beta}\big(\frac{2\pi  m n}{q}\big)
 \sum_{1\le a \le q, (a,q)=1 \atop a\equiv \pm mn \bmod d   } e\big(\frac
a{q}\big )
\end{eqnarray*}

The sum over $a$ may be evaluated by the following lemma.

\begin{lemma}[An exponential sum] Suppose that $(r,d)=1$. If $(c,d)=1$, then
$$ \sumprime_{a=1\atop a\equiv r \bmod d}^{cd}  e\big(\frac a{cd}\big )
=\mu(c)  e\big(\frac {-r\overline{d}}{c}\big )
 e\big(\frac r{cd}\big )= \mu(c)e\big(\frac {r\overline{c}}{d}\big )
$$
where the $\sumprime$ indicates that the sum is for $(a,cd)=1$ and $\overline{d}$ is the inverse of $d$ modulo $c$. If
$(c,d)>1$ then the above sum is 0.
\end{lemma}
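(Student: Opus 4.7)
The plan is to parametrize the sum, reduce it to a Ramanujan sum via the Chinese Remainder Theorem when $(c,d)=1$, and kill it by an orbit-averaging argument when $(c,d)>1$.

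First, write $a = r + dk$ with $k$ ranging over $0, 1, \dots, c-1$; this parametrizes the residues mod $cd$ that are $\equiv r \pmod d$. The hypothesis $(r,d)=1$ forces $(a,d)=1$ automatically, so the only nontrivial coprimality constraint is $(r+dk,c)=1$. The exponential factors as $e(r/(cd))\, e(k/c)$, reducing the problem to evaluating $\sum_{k :\, (r+dk,c)=1} e(k/c)$.

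When $(c,d)=1$, the map $k \mapsto b \equiv r + dk \pmod c$ is a bijection of $\mathbb{Z}/c\mathbb{Z}$ that converts the coprimality condition $(r+dk,c)=1$ into $(b,c)=1$ and rewrites $k/c$ as $(b-r)\overline{d}/c \pmod 1$. The sum then becomes $e(-r\overline{d}/c)$ times the Ramanujan sum $\sum_{(b,c)=1} e(b\overline{d}/c) = \mu(c)$, where $(\overline{d}, c)=1$ is used. This gives the first stated form $\mu(c)\, e(-r\overline{d}/c)\, e(r/(cd))$. The equivalence with the second form $\mu(c)\, e(r\overline{c}/d)$ is a short algebraic check: combine the two exponentials into $e(r(1-d\overline{d})/(cd))$, use $d\overline{d}\equiv 1\pmod c$ to write $1-d\overline{d}=-cm$, and observe that the relation $cm\equiv -1\pmod d$ identifies $-m$ with $\overline{c}\pmod d$.

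When $(c,d) > 1$, pick a prime $p$ dividing $\gcd(c,d)$. The shift $a \mapsto a + cd/p \pmod{cd}$ sends the summation set to itself: it preserves $a \bmod d$ because $p\mid c$ makes $cd/p$ a multiple of $d$, and it preserves $(a,cd)=1$ because every prime divisor of $cd$ still divides $cd/p$ (the case $q=p$ needs $p^2\mid cd$, which holds since $p\mid d$ as well). On each orbit of length $p$ the values of $e(a/(cd))$ form a common phase times the full set of $p$-th roots of unity, so they sum to $0$. The argument presents no real obstacle; the only delicate bookkeeping lies in matching the two closed forms in the coprime case, which reduces to the standard CRT relation between $\overline{d}_c$ and $\overline{c}_d$.
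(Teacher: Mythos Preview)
Your proof is correct and follows essentially the same route as the paper: the same parametrization $a=r+dk$, the same reduction of the coprimality condition, and the same change of variable to a Ramanujan sum in the coprime case. For $(c,d)>1$ you use a shift by $cd/p$ on $a$ for a prime $p\mid(c,d)$, whereas the paper shifts the parameter $x$ by $c'=c/g$ with $g=(c,d)$; both are the same orbit-averaging idea, and your extra verification of the identity $e(-r\overline{d}/c)e(r/(cd))=e(r\overline{c}/d)$ is a detail the paper leaves implicit.
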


\begin{proof} Write $a=r+dx$ where $x$ ranges from 1 to $c$. The
sum in question is
$$ e\big(\frac{r}{cd}\big) \sum_{x=1 \atop (r+dx,cd)=1}^c
e\big(\frac{x}{c}\big) .$$ Since $(r,d)=1$ the condition $(r+dx
,d)=1$ is automatically satisfied so that the condition on $x$
simplifies to $(r+dx,c)=1$. Suppose that $(c,d)=1$. Then we can
make the change of variable $y=r+dx$; this means that
$x=\overline{d}(y-r)$. Then the sum in question can be written as
$$ e\big(\frac{r}{cd}\big) e\big(\frac{-r\overline{d}}{c}\big)\sum_{y=1
\atop (y,c)=1}^c
e\big(\frac{\overline{d}y}{c}\big) .$$ The sum over $y$ is a
Ramanujan sum which evaluates to $\mu(c)$ as desired. In the case
that $(c,d)=g>1$ we write $c=gc'$ and $d=gd'$. The coprimality
condition on $x$ then becomes $(r+gd'x,gc')=1$. If this condition
is satisfied for $x=x_0$ then it is also satisfied for $x=x_0+\ell
c'$ for $1\le \ell \le g$. But this subsum over $x\equiv x_0 \bmod
c'$ is a geometric progression which sums to 0. The total sum is
made up of a collection of such subsums and so it is equal to 0 as
well.
\end{proof}

Applying this lemma to $M$ we find that

\begin{eqnarray*}
M(\alpha,\beta)&=& \frac{1}{q}   \sum_{cd=q\atop (c,d)=1} \mu^2(c)\phi(d)
 \sum_{(mn,q)=1}m^{-\alpha-\beta}
K_{\alpha,\beta}\big(\frac{2\pi  m n}{q}\big)
\left(e\big(\frac
{mn\overline{c}}{d}\big )+e\big(\frac
{-mn\overline{c}}{d}\big )\right)
\end{eqnarray*}

Now we introduce a parameter $D$ and split $M$ into two different sums according to whether
$d\le D$ or $d>D$. Let $M_L(\alpha,\beta)$ denote the terms of
$M(\alpha,\beta)$ for which $d\le D$ and let $M_U(\alpha, \beta)$
denote those terms with $d>D$.

To estimate $M_L$ we replace $K$ by its Mellin transform and have
\begin{eqnarray}\label{eq:MLanalysis}
M_L(\alpha,\beta)&=& \frac{2}{q}   \sum_{cd=q, d\le D \atop (c,d)=1} \mu^2(c)\phi(d)
 \frac{1}{2\pi i}
\int_{(2)} \hat{K}_{\alpha,\beta}(s)\left(\frac{q}{2\pi}\right)^{s+1/2-\beta}
D_q(s+1/2;\alpha,-\beta;\frac{\overline{c}}{d})~ds
\end{eqnarray}
where
\begin{eqnarray} \label{eq:D}
D_q(s;\alpha,\beta;r/d):=\sum_{(mn,q)=1}m^{-s-\alpha}n^{-s-\beta}
\cos \tfrac{2\pi mnr}{d}.
\end{eqnarray}

Now we  can write
\begin{eqnarray}\label{eq:D1}
D_q(s,\alpha,\beta;r/d)=\frac{1}{\phi(d)}\sum_{\psi \bmod d}\tau(\overline{\psi})\tfrac 12(\psi(r)+\psi(-r))
L_q(s+\alpha,\psi)L_q(s+\beta,\psi).
\end{eqnarray}
The only terms with poles arise from $\psi=\psi_0$ the principal character,
for which we have $L_q(s,\psi_0)=\zeta_q(s)$ and $\tau(\psi_0)=\mu(d)$.

We move the path of integration in the $s$-integral to $\Re s=-1/2$. In doing so, we cross poles at
$s=1/2-\alpha$, at $s=1/2+\beta$ and at $s=0$.
Now, it can be shown that
\begin{eqnarray} \label{eq:f}
\sum_{\psi \bmod c} \int_{-\infty}^\infty |f(it)||L(it,\psi)|^2~dt
\ll \phi(c)c
\end{eqnarray}
for any smooth rapidly decaying function $f$.
 Since   $\tau(\psi)\ll d^{\tfrac
12}$, the integral on the new path is $\ll d^{3/2}$, and the error-term is
$$\ll \frac{1}{q}\sum_{cd=q, (c,d)=1 \atop
d\le D} \phi(d)d^{3/2}
.$$

In this way, we find that
\begin{eqnarray}  \label{eq:ML}
&&M_L(\alpha,\beta)=
\frac{2}{q}   \sum_{cd=q, d\le D \atop (c,d)=1} \mu^2(c)\phi(d)\bigg(
\frac{\mu(d)}{\phi(d)}\frac{\phi(q)}{q}\bigg(\hat{K}_{\alpha,\beta}(1/2-\alpha)\left(\frac{q}{2\pi}\right)^{1-\alpha-\beta}
\zeta_q(1-\alpha-\beta)
\\
&&\qquad \qquad \qquad \qquad \qquad \nonumber +\hat{K}_{\alpha,\beta}(1/2-\beta)\left(\frac{q}{2\pi}\right)
\zeta_q(1+\alpha+\beta)\bigg)\\
&&\qquad \qquad \qquad \nonumber  +2\Gamma(1/2-\beta)\cos\tfrac \pi 2 (1/2-\beta)
\left(\frac{q}{2\pi}\right)^{1/2-\beta}
D_q(1/2,\alpha,-\beta,\overline{c}/d)\bigg)
\\
&&\quad \qquad \qquad \qquad \qquad \qquad \nonumber +O\bigg(\frac{1}{q}\sum_{cd=q, (c,d)=1 \atop
d\le D} \phi(d)d^{3/2} \bigg)
\end{eqnarray}

The first two terms of the above can be estimated by $\tau(q)\ll q^{\epsilon}$. Also,
we see from (\ref{eq:D1}) that
\begin{eqnarray*}
D_q(1/2,\alpha,-\beta,\overline{c}/d)
=\frac{1}{2\phi(d)}\sumeven_{\psi \bmod d}\tau(\overline{\psi})\overline{\psi}(c)
L_q(1/2+\alpha,\psi)L_q(1/2-\beta,\psi).
\end{eqnarray*}

Now we turn to $M_U$. We have
  $M_U(\alpha,\beta)=$
\begin{eqnarray*}
 &&\frac{1}{q} \sum_{cd=q\atop {(c,d)=1  \atop
d>D}} \mu^2(c)\phi(d)
 \sum_{m,n \atop (mn,q)=1} m^{-\alpha-\beta}
K_{\alpha,\beta}\big(\frac{2\pi  m n}{q}\big) \bigg(e\big(\frac
{-mn\overline{d}}{c}\big )
 e\big(\frac {mn}{q}\big ) +e\big(\frac
{mn\overline{d}}{c}\big )
 e\big(\frac {-mn}{q}\big )
\bigg)\end{eqnarray*}

Now, since $(mnd,c)=1$, we can write $e(-mn\overline{d}/c)$ in
terms of characters modulo $c$:
$$ e\big(\frac {-mn\overline{d}}{c}\big )=\frac{1}{\phi(c)}\sum_{\psi
\bmod c}
 \tau(\overline{\psi})\psi(-mn\overline{d}).$$
Thus, we obtain
\begin{eqnarray*}
M_U(\alpha,\beta)&=& \frac{1}{q} \sum_{cd=q\atop {(c,d)=1 \atop
d>D}} \frac{\mu^2(c)\phi(d)}{\phi(c)} \sum_{\psi \bmod c}
\tau(\overline{\psi})\psi(- \overline{d})\\
&&\qquad \times
  \sum_{m,n \atop (mn,d)=1}\psi(mn)  m^{-\alpha-\beta}
K_{\alpha,\beta}\big(\frac{2\pi  m n}{q}\big)
 \bigg( e\big(\frac {mn}{q}\big ) +\psi(-1)
 e\big(\frac {-mn}{q}\big )
\bigg)\end{eqnarray*}

Now write
$$e^{\pm ix} K_{\alpha,\beta}(x)=\frac{1}{2 \pi i} \int_{(2)}
\mathcal{K}^{\pm}_{\alpha,\beta}(s)x^{-s}~ds$$ where
$$\mathcal{K}^{\pm}_{\alpha,\beta}(s)=\int_0^\infty e^{\pm ix}
K_{\alpha,\beta}(x)x^s
\frac{dx}{x}.$$
Repeated integration by parts shows that the function $\int_0^1 e^{ix} x^s \tfrac{dx}x$ has poles
at $s=0,-1,-2,\dots.$ Thus, by use of (\ref{eq:K}), we see that
$\mathcal{K}^{\pm}_{\alpha,\beta}(s)$ is analytic apart from simple poles
at $s=1/2-\beta, -1/2-\beta, -3/2-\beta,\dots.$
Let $r^{\pm}_{\alpha,\beta}(\ell)$ denote the residue at $s=1/2-\beta-\ell$, for $\ell=0,1,2,\dots$.
 Then,
 \begin{equation} r^{+}_{\alpha,\beta}(0)=r^{-}_{\alpha,\beta}(0)=:
r_{\alpha, \beta}=\Gamma(1/2-\beta)\cos\tfrac{\pi}{2}(\tfrac12-\beta).
\end{equation}
 It is easy to see that
$\mathcal{K}^{\pm}_{\alpha, \beta}(s)$ decays rapidly in vertical
strips which avoid the poles.
(In general, if $K(x)$ is infinitely differentiable and satisfies, for any $n\ge0$ and any fixed $N>0$,
\begin{eqnarray}  \label{eq:Kcond}
K^{(n)}(x)=\left\{ \begin{array}{ll}\delta_n(x)+O(x^N) &\mbox{ if $0<x<1$ }\\
O(x^{-N}) & \mbox{ if $x>1$ }
\end{array}\right.
\end{eqnarray}
where $\delta_n(x)=0$ if $n>0$ and $\delta_0(x)$ is a nice smooth function, then
\begin{eqnarray*} \hat{K}(s)=\int_0^\infty K(x) x^s\frac{dx}{x}
\end{eqnarray*}
satisfies
\begin{eqnarray} \label{eq:Kestimate}
\hat{K}(s)\ll \max_{x}|K^{(N)}(x)||s|^{-N}
\end{eqnarray}
as $|\Im s| \to \infty$ for any fixed $N>0$. The proof is as follows. If $\Re s>0$ then repeated integration by parts
gives
\begin{eqnarray*}
\hat{K}(s)=(-1)^{n+1}\int_0^\infty K^{(n+1)}(x)\frac{x^{s+n}}{s(s+1)\dots (s+n)}~dx,
\end{eqnarray*}
the integrated terms vanishing at 0 because of the $x^{s+n}$ factor; then (\ref{eq:Kestimate}) follows easily in this
situation.
Another formula for $\hat{K}(s)$ is obtained by replacing $x$ by $1/x$:
\begin{eqnarray*}
\hat{K}(s)=\int_0^\infty K(1/x) x^{-s}\frac{dx}{x};
\end{eqnarray*}
we are still assuming that $\Re s>0$. If we integrate by parts, we have
\begin{eqnarray*}
\hat{K}(s)&=&K(1/x)\frac{x^{-s}}{-s}\bigg|_{0}^\infty-\frac{1}{s}\int_0^\infty K'(1/x) x^{-2-s}~dx\\
&=& -\frac{1}{s}\int_0^\infty K'(1/x) x^{-2-s}~dx;
\end{eqnarray*}
this formula is now valid for $\Re s>-1$. Repeatedly integrating by parts will lead us to (\ref{eq:Kestimate}).)

We have
\begin{eqnarray} \label{eq:MUint}
M_U(\alpha,\beta) &=& \frac{1}{q}\sum_{cd=q\atop {(c,d)=1\atop
d>D} } \frac{\mu^2(c)\phi(d)}{\phi(c)}\sum_{\psi \bmod
c}\tau(\overline{\psi})\overline{\psi}(-d)\\
&&\times  \frac{1}{2\pi
i}\int_{(2)}\big(\mathcal{K}^+_{\alpha,\beta}(s)+\psi(-1)\mathcal{K}^-_{\alpha,\beta}(s)\big)
\frac{q^s}{(2\pi)^s} \sum_{m,n \atop (mn,d)=1}
\frac{\psi(mn)m^{-\alpha-\beta}}{m^sn^s} ~ds \nonumber
\end{eqnarray}

The sum over $m$ and $n$  is
$$\sum_{m,n \atop (mn,d)=1}
\frac{\psi(mn)m^{-\alpha-\beta}}{m^sn^s}=L_d(s,\psi)L_d(s+\alpha+\beta,\psi)
 .
$$

When $\psi=\psi_0$ is the principal character modulo $c$ this sum
has a pole at $s=1$ and a pole at $s=1-\alpha-\beta$ and no other
singularities.  If $\psi$ is not a principal character then the
sum is entire.

Let $\mathcal{K}_{\alpha,\beta}(s)=\mathcal{K}^+_{\alpha,\beta}(s)+\mathcal{K}^-_{\alpha,\beta}(s)$.
We move the path of integration to the vertical line through 0; in
doing so we pass the poles at $s=1$ and at $s=1-\alpha -\beta$
from the principal characters and also the pole at $s=1/2-\beta$
from $\mathcal{K}_{\alpha, \beta}(s)$.

Now, invoking (\ref{eq:f}), and since
  $\tau(\psi)\ll c^{\tfrac
12}$, the integral on the new path is
$$\ll \frac{1}{q}\sum_{cd=q \atop
d>D} \phi(d)c^{3/2}= \sqrt{q}\sum_{d\mid q \atop
d>D} \frac{\phi(d)}{d^{3/2}}
.$$

Thus,
\begin{eqnarray*} M_U(\alpha, \beta)&=&\frac{1}{q}\sum_{cd=q\atop
{(c,d)=1\atop d>D} }
\frac{\mu(c)\phi(d)}{\phi(c)}\bigg(\mathcal{K}_{\alpha,\beta}(1)\frac{q}{2\pi}\zeta(1+\alpha+\beta)
\prod_{p\mid q} \big( 1-\frac{1}{p}\big)\big(
1-\frac{1}{p^{1+\alpha+\beta}}\big) \\
&&+ \mathcal{K}_{\alpha,\beta}(1-\alpha-\beta)
\big(\frac{q}{2\pi}\big)^{1-\alpha-\beta}\zeta(1-\alpha-\beta)
\prod_{p\mid q} \big( 1-\frac{1}{p}\big)\big(
1-\frac{1}{p^{1-\alpha-\beta}}\big)  \\
&&\quad   \qquad+  2 r_{\alpha, \beta}\mu(c)\sumeven_{\psi \bmod
c}\tau(\overline{\psi})\overline{\psi}(-d)
\big(\frac{q}{2\pi}\big)^{\tfrac 12 -\beta} L_q(\tfrac
12-\beta,\psi)L_q(\tfrac 12 +\alpha,\psi) \\
&& \quad \qquad \qquad
 +O\bigg( \sqrt{q}\sum_{d\mid q \atop
d>D} \frac{\phi(d)}{d^{3/2}} \bigg)\\
\end{eqnarray*}

We can simplify this expression a little by noting that
$$\frac{1}{q}\sum_{cd=q\atop {(c,d)=1\atop d>D} }
\frac{\mu(c)\phi(d)}{\phi(c)}=\frac{\phi(q)}{q}\sum_{cd=q\atop
{(c,d)=1\atop c\le \frac{q}{D}} } \frac{\mu(c) }{\phi(c)^2} =
\frac{\phi(q)}{q}\sum_{cd=q\atop (c,d)=1  } \frac{\mu(c)
}{\phi(c)^2}+O(q^{\epsilon-2}D^2). $$ Upon using $\prod_{p\mid
q}(1-1/p)=\phi(q)/q$ we see that the first main term of $M_U$ is
\begin{eqnarray*}
&& \frac{\phi(q)^2}{q^2}\sum_{cd=q\atop (c,d)=1  } \frac{\mu(c)
}{\phi(c)^2}\bigg(\mathcal{K}_{\alpha,\beta}(1)\frac{q}{2\pi}\zeta(1+\alpha+\beta)
\prod_{p\mid q}  \big(
1-\frac{1}{p^{1+\alpha+\beta}}\big)\\
&&\qquad + \mathcal{K}_{\alpha,\beta}(1-\alpha-\beta)
\big(\frac{q}{2\pi}\big)^{1-\alpha-\beta}\zeta(1-\alpha-\beta)
\prod_{p\mid q} \big( 1-\frac{1}{p^{1-\alpha-\beta}}\big)\bigg)
+O(q^{\epsilon-1}D^2).
\end{eqnarray*}

Altogether now we have
\begin{eqnarray*}
M(\alpha,\beta) &=&\frac{\phi(q)^2}{q^2}\sum_{cd=q\atop (c,d)=1  }
\frac{\mu(c)
}{\phi(c)^2}\bigg(\mathcal{K}_{\alpha,\beta}(1)\frac{q}{2\pi}\zeta(1+\alpha+\beta)
\prod_{p\mid q}  \big(
1-\frac{1}{p^{1+\alpha+\beta}}\big)\\
&&\qquad + \mathcal{K}_{\alpha,\beta}(1-\alpha-\beta)
\big(\frac{q}{2\pi}\big)^{1-\alpha-\beta}\zeta(1-\alpha-\beta)
\prod_{p\mid q} \big( 1-\frac{1}{p^{1-\alpha-\beta}}\big)\bigg)\\
&& +2r_{\alpha,
\beta}\frac{\phi(q)}{q}\left(\frac{q}{2\pi}\right)^{1/2
-\beta}\sum_{cd=q\atop (c,d)=1, d>D  } \frac{\mu^2(c) }{\phi(c)^2}
\sumeven_{\psi \bmod c}\tau(\overline{\psi})\overline{\psi}(-d)
 L_q(\tfrac
12+\alpha,\psi)L_q(\tfrac 12 -\beta,\psi)
 \\
&&+2r_{\alpha,\beta}\frac{\phi(q)}{q}   \left(\frac{q}{2\pi}\right)^{1/2-\beta}
\sum_{cd=q, d\le D \atop (c,d)=1} \frac{\mu^2(c)}{\phi(c)\phi(d)}
\sumeven_{\psi \bmod d}\tau(\overline{\psi}) \overline{\psi}(c)
L_q(1/2+\alpha,\psi)L_q(1/2+\beta,\psi)
\\
&&\quad \qquad  \qquad \qquad +O\bigg(\frac{1}{q}\sum_{d\mid q\atop
d\le D} \phi(d)d^{3/2}+ \sqrt{q}\sum_{d\mid q \atop
d>D} \frac{\phi(d)}{d^{3/2}}+q^\epsilon +q^{\epsilon-1}D^2 \bigg)
\end{eqnarray*}
If we choose $D=q^{1/2}$ we see that the error-term is
$O(q^{1/4}\tau(q)).$
Actually, it is not hard to see that this O-term can also be written as
$$O((1+|\{d:d\mid q \mbox{ and } q^{1/2-\epsilon}<d<q^{1/2+\epsilon}\}|)~q^{1/4}).$$

Now to complete the argument we need to combine $M(\alpha,\beta)$
and $M(\beta,\alpha)$. We can accomplish this task by computing
$\mathcal{K}_{\alpha,\beta}(1)+\mathcal{K}_{ \beta,\alpha}(1)$ and
$\mathcal{K}_{\alpha,\beta}(1-\alpha-\beta )+\mathcal{K}_{
\beta,\alpha}(1-\alpha -\beta).$   We have
\begin{eqnarray*}\frac 12
\mathcal{K}_{\alpha,\beta}(1+\delta)&=&\int_0^\infty
K_{\alpha,\beta}(x) x^\delta \cos x~dx\\
&=& \int_0^\infty \frac{1}{2\pi i}\int_{(\tfrac
14)}F(s,\alpha,\beta) \Gamma(s+\tfrac 12 -\beta)\cos
\tfrac{\pi}{2}(s+\tfrac 12-\beta) x^{-s-\tfrac 12 +\beta} ~ds
~x^\delta \cos x ~dx\\
&=&\frac{1}{2\pi i}\int_{(\tfrac 14)}F(s,\alpha,\beta)
\Gamma(s+\tfrac 12 -\beta)\cos \tfrac{\pi}{2}(s+\tfrac 12-\beta)
\int_0^\infty   x^{-s-\tfrac 12 +\beta+\delta}\cos x~dx ~ds\\
&=&\frac{1}{2\pi i}\int_{(\tfrac 14)}F(s,\alpha,\beta)
\Gamma(s+\tfrac 12 -\beta)\cos \tfrac{\pi}{2}(s+\tfrac
12-\beta)\\
&&\qquad \qquad \qquad \times \Gamma(\tfrac 12-s+\beta+\delta)
\sin \tfrac{\pi}{2}(s+\tfrac 12 -\beta-\delta) ~ds.
\end{eqnarray*}

In  both situations $\delta=0$ and $\delta =-\alpha-\beta$ it is
easily checked that the integrand is odd with respect to the
change $(s,\alpha, \beta)\to (-s, \beta, \alpha)$. This means that
$$ \frac 12 \big(\mathcal{K}_{\alpha,\beta}(1+\delta)+\mathcal{K}_{
\beta,\alpha}(1+\delta)\big)$$ is equal to the residue at $s=0$ of
the integrand.  Since the residue of $F(s,\alpha, \beta)$ is 1, we
have
\begin{equation}\label{eq:Keval}  \mathcal{K}_{\alpha,\beta}(1+\delta)+\mathcal{K}_{
\beta,\alpha}(1+\delta) =2\Gamma( \tfrac 12 -\beta)\cos
\tfrac{\pi}{2}( \tfrac 12-\beta)  \Gamma(\tfrac 12 +\beta+\delta)
\sin \tfrac{\pi}{2}( \tfrac 12 -\beta-\delta).
\end{equation}
When $\delta=0$ this is easily seen to be $\pi$.  When
$\delta=-\alpha-\beta$ this expression evaluates to
$$2\Gamma(\tfrac12-\alpha)\Gamma(\tfrac12-\beta)\cos \tfrac{\pi}{2}(\tfrac
12-\alpha)
\cos\tfrac{\pi}{2}(\tfrac 12-\beta).$$

Note that for an even character $\chi$
\begin{eqnarray} \label{eq:X} \nonumber
X(1/2+\alpha,\chi)X(1/2+\beta,\overline{\chi})&=&4\frac{\tau(\chi)\tau(\overline{\chi})}{q^2}
\big(\frac{q}{2\pi}\big)^{1-\alpha-\beta}\Gamma(\tfrac 12
-\alpha)\Gamma(\tfrac12 -\beta) \cos \tfrac{\pi}{2}(\tfrac 12
-\alpha)\cos \tfrac{\pi}{2}(\tfrac 12 -\beta)\\
&=&\frac{4}{q}\nonumber
\big(\frac{q}{2\pi}\big)^{1-\alpha-\beta}\Gamma(\tfrac 12
-\alpha)\Gamma(\tfrac12 -\beta) \cos \tfrac{\pi}{2}(\tfrac 12
-\alpha)\cos \tfrac{\pi}{2}(\tfrac 12 -\beta)\\
&=:& X^+(q,\alpha,\beta).
\end{eqnarray}

 As a final step, we
prove that
$$\frac{\phi(q)^2}{q}\sum_{cd=q\atop (c,d)=1  } \frac{\mu(c)
}{\phi(c)^2}=  \sum_{d\mid q} \phi(d)\mu(q/d)=: \phi^*(q)$$ the
number  of primitive   characters modulo $q$. Since both sides are
multiplicative, it suffices to prove this identity for $q=p^r$ for
a prime $p$ and a positive integer $r$. If $r=1$ then both sides
are equal to
$$\frac{(p-1)^2}{p} \big( 1-\frac{1}{(p-1)^2}\big)=p-2.$$
If $r>1$ then the left side is
$$\frac{\phi(p^r)^2}{p^r}=\frac{(p^r-p^{r-1})^2}{p^r}=p^r-2p^{r-1}+p^{r-2}$$
and the right side is
$$ \phi(p^r)-\phi(p^{r-1})=p^r-2p^{r-1}+p^{r-2}$$
as desired.

Thus, we have proven

\begin{theorem} Let $m=\min\{c,d\}$ and $M=\max\{c,d\}$. Then, for any parameter $D$.
\begin{eqnarray*}
&&\sumstar_{\chi\bmod q\atop \chi(-1)=1}
L(1/2+\alpha,\chi)L(1/2+\beta,\overline{\chi})\\
&& \qquad =\frac{\phi^*(q)}{2}(\zeta_q(1+\alpha+\beta)   +
X^+(q,\alpha,\beta)\zeta_q(1-\alpha-\beta))
  \\
&&  \qquad \qquad +
 2\frac{\phi(q)}{q}\sum_{cd=q\atop (c,d)=1  } \frac{\mu^2(c) }{\phi(c)}\frac{1}{\phi(m)}
\sumeven_{\psi \bmod
m}\tau(\overline{\psi})\overline{\psi}(M) \\
&& \quad \qquad \qquad \times \bigg(\Gamma(1/2-\beta)\cos\tfrac{\pi}{2}
(\tfrac12-\beta)\big(\frac{q}{2\pi}\big)^{\tfrac 12 -\beta} L_q(\tfrac
12+\alpha,\psi)L_q(\tfrac 12 -\beta,\psi)  \\
&&\qquad \qquad \qquad \qquad + \Gamma(1/2-\alpha)\cos\tfrac{\pi}{2}
(\tfrac12-\alpha)\big(\frac{q}{2\pi}\big)^{\tfrac 12 -\alpha} L_q(\tfrac
12-\alpha,\psi)L_q(\tfrac 12 +\beta,\psi) \bigg)\\
&&\qquad \qquad \qquad \qquad \qquad +O\bigg(\frac{1}{q}\sum_{d\mid q\atop
d\le D} \phi(d)d^{3/2}+ \sqrt{q}\sum_{d\mid q \atop
d>D} \frac{\phi(d)}{d^{3/2}}+q^\epsilon +q^{\epsilon-1}D^2 \bigg)
\end{eqnarray*}
\end{theorem}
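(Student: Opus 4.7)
The plan is to assemble, in a forward-looking way, the pieces already developed in Section 2. The identity
$$S_q^+(1/2,\alpha,\beta)=M(\alpha,\beta)+M(\beta,\alpha)$$
has been established, and explicit evaluations for $M_L$ and $M_U$ are displayed just above the theorem. First I would take the combined expression for $M(\alpha,\beta)=M_L(\alpha,\beta)+M_U(\alpha,\beta)$. The $M_L$-contribution (contour shifted to $\Re s=-1/2$) contributes, via the residues of $\hat K_{\alpha,\beta}$ at $s=0$ and of $\zeta_q$ at $s=1/2-\alpha,\,1/2+\beta$, two main-term shapes proportional to $\zeta_q(1\pm(\alpha+\beta))$ together with an off-diagonal sum over even primitive characters $\psi\bmod d$ with $d\leq D$ weighted by $\tau(\overline\psi)\overline\psi(c)/\phi(d)$. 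The $M_U$-contribution (contour shifted to $\Re s=0$) contributes, via the poles of $L_d(s,\psi_0)L_d(s+\alpha+\beta,\psi_0)$ at $s=1,\,1-\alpha-\beta$ and of $\mathcal K_{\alpha,\beta}$ at $s=1/2-\beta$, the same two main-term shapes together with an off-diagonal sum over even $\psi\bmod c$ with $d>D$ (equivalently $c<q/D$), weighted by $\tau(\overline\psi)\overline\psi(-d)/\phi(c)$.

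The crucial observation is that, with $D=q^{1/2}$, the two off-diagonal sums fit together into one sum in which the character modulus is always the smaller divisor $m=\min\{c,d\}$ and the remaining divisor is $M=\max\{c,d\}$. Indeed, in the $M_L$-sum $d$ plays the role of $m$ with twist $\overline\psi(c)=\overline\psi(M)$, while in the $M_U$-sum $c$ plays the role of $m$ with twist $\overline\psi(-d)=\overline\psi(M)$ since $\psi$ is even. The cutoff $D$ therefore cancels from the assembled off-diagonal term, surviving only in the $O$-bound listed at the end of the theorem. I would then symmetrize by adding $M(\beta,\alpha)$; the off-diagonal contributions pair up to produce the two $L_q$-products displayed in the theorem (one with shifts $(1/2+\alpha,1/2-\beta)$, the other with $(1/2-\alpha,1/2+\beta)$), each multiplied by the corresponding gamma/cosine factor coming from $r_{\alpha,\beta}$ or $r_{\beta,\alpha}$.

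For the main terms, the combined coefficients are
$$\mathcal K_{\alpha,\beta}(1)+\mathcal K_{\beta,\alpha}(1) \qquad\text{and}\qquad \mathcal K_{\alpha,\beta}(1-\alpha-\beta)+\mathcal K_{\beta,\alpha}(1-\alpha-\beta),$$
whose closed-form values ($\pi$ and $2\Gamma(\tfrac12-\alpha)\Gamma(\tfrac12-\beta)\cos\tfrac\pi2(\tfrac12-\alpha)\cos\tfrac\pi2(\tfrac12-\beta)$) are supplied by \eqref{eq:Keval}. Inserting these and recognising the second as $q\cdot X^+(q,\alpha,\beta)/(4(q/(2\pi))^{1-\alpha-\beta})$ via \eqref{eq:X} turns the two main-term contributions into $\zeta_q(1+\alpha+\beta)$ and $X^+(q,\alpha,\beta)\zeta_q(1-\alpha-\beta)$ respectively, each multiplied by a common factor which the multiplicative identity
$$\tfrac{\phi(q)^2}{q}\sum_{cd=q,\,(c,d)=1}\tfrac{\mu(c)}{\phi(c)^2}=\phi^*(q),$$
established just before the theorem, simplifies to $\phi^*(q)/2$. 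Together with the error terms already bookkept for $M_L$, $M_U$, and the truncation at $D$, this reproduces exactly the claimed formula.

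The main obstacle I expect is the reorganisation in the second paragraph: one must check that the weights $\tau(\overline\psi)\overline\psi(c)/\phi(d)$ and $\tau(\overline\psi)\overline\psi(-d)/\phi(c)$, the coprimality conditions, and the $\psi(-1)=1$ restrictions merge consistently into a single sum indexed by the unordered pair $\{m,M\}$ so that the artificial parameter $D$ disappears from the main term. Once this bookkeeping is carried out, the evaluation of the main-term coefficients and the $\phi^*(q)$-identity are essentially formal.
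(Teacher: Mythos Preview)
Your plan is correct and follows the paper's approach in Section~2 essentially verbatim: split $M=M_L+M_U$, shift contours, collect residues, merge the two off-diagonal sums into a single $\min/\max$ sum, then symmetrise in $(\alpha,\beta)$ using the $\mathcal K$-identity \eqref{eq:Keval} and the $\phi^*(q)$ identity. One small point to watch: the $\zeta_q(1\pm(\alpha+\beta))$ residues you describe as arising from $M_L$ carry the prefactor $\tfrac{2\phi(q)}{q^2}\sum_{d\le D}\mu^2(c)\mu(d)$ and are therefore $O(q^\epsilon)$---the paper absorbs them into the error term, so the leading $\phi^*(q)/2$ coefficient comes entirely from the $M_U$ residues at $s=1$ and $s=1-\alpha-\beta$, which is consistent with the $\mathcal K$-coefficients you actually compute in your third paragraph.
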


Letting $\alpha$ and $\beta\to 0$, we have
\begin{corollary} With $\phi^*(q)$ denoting the number of primitive characters modulo q, we have
\begin{eqnarray*}
\sumstar_{\chi\bmod q\atop \chi(-1)=1}
|L(1/2,\chi)|^2
&=&\frac{\phi^*(q)}{2}\frac{\phi(q)}{q}\left(\log\frac{q}{2\pi}+2\gamma+\frac{\Gamma'}{\Gamma}(1/2)
-\frac \pi 2 +2\sum_{p\mid q}\frac{\log p}{p-1}\right)
  \\
&&  \quad  +
 2\frac{\phi(q)}{q^{1/2}}\sum_{cd=q\atop (c,d)=1  } \frac{\mu^2(c) }{\phi(c)}\frac{1}{\phi(\min\{c,d\})}
\sumeven_{\psi \bmod
\min\{c,d\}}\tau(\overline{\psi})\overline{\psi}(\max\{c,d\}) L_q(\tfrac
12,\psi)^2 \\
&&\qquad \qquad +O\bigg(\frac{1}{q}\sum_{d\mid q\atop
d\le D} \phi(d)d^{3/2}+ \sqrt{q}\sum_{d\mid q \atop
d>D} \frac{\phi(d)}{d^{3/2}}+q^\epsilon +q^{\epsilon-1}D^2 \bigg)
\end{eqnarray*}
for any D.  With $D=\sqrt{q}$, the error term is $O(q^{1/4}d(q))$ where $d$ denotes the divisor function.
\end{corollary}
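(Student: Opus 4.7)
The plan is to obtain Corollary 6 as a direct limit of Theorem 5 as $\alpha,\beta\to 0$. The secondary terms (those involving the sums over non-principal characters $\psi$ and the $L_q(\tfrac12\pm\cdot,\psi)$) are continuous in $(\alpha,\beta)$ near the origin, and the error term is uniform in $\alpha,\beta\ll 1/\log q$, so these parts pass to the limit without difficulty. The main obstacle is the first main term
\[
\tfrac{\phi^*(q)}{2}\bigl(\zeta_q(1+\alpha+\beta)+X^+(q,\alpha,\beta)\zeta_q(1-\alpha-\beta)\bigr),
\]
in which each summand has a simple pole as $\alpha+\beta\to 0$; I need to show these poles cancel and extract the finite part.

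For that, I set $x=\alpha+\beta$ and Laurent-expand each piece about $x=0$. Using $\zeta_q(s)=\prod_{p\mid q}(1-p^{-s})\zeta(s)$, a short calculation gives
\[
\zeta_q(1\pm x)=\tfrac{\phi(q)}{q}\Bigl(\pm\tfrac1x+\gamma+\sum_{p\mid q}\tfrac{\log p}{p-1}\Bigr)+O(x).
\]
For $X^+$, I use the definition in (\ref{eq:X}) and observe that $X^+(q,0,0)=\tfrac{4}{q}\cdot\tfrac{q}{2\pi}\cdot\Gamma(1/2)^2\cos^2(\pi/4)=1$. Differentiating $\log X^+$ in $\alpha$ at the origin yields $-\log(q/2\pi)-\tfrac{\Gamma'}{\Gamma}(1/2)+\tfrac{\pi}{2}\tan(\pi/4)$, with the same value for $\beta$ by symmetry, so
\[
X^+(q,\alpha,\beta)=1-x\Bigl(\log\tfrac{q}{2\pi}+\tfrac{\Gamma'}{\Gamma}(1/2)-\tfrac{\pi}{2}\Bigr)+O(x^2).
\]
Multiplying through, the $1/x$-poles cancel and the constant term of $\zeta_q(1+x)+X^+(q,\alpha,\beta)\zeta_q(1-x)$ works out to $\tfrac{\phi(q)}{q}\bigl(\log\tfrac{q}{2\pi}+2\gamma+\tfrac{\Gamma'}{\Gamma}(1/2)-\tfrac{\pi}{2}+2\sum_{p\mid q}\tfrac{\log p}{p-1}\bigr)$, matching the bracketed expression in the corollary.

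For the secondary terms, I set $\alpha=\beta=0$ directly. The two gamma-cosine prefactors both become $\Gamma(1/2)\cos(\pi/4)=\sqrt{\pi/2}$, and both products $L_q(\tfrac12+\alpha,\psi)L_q(\tfrac12-\beta,\psi)$ and $L_q(\tfrac12-\alpha,\psi)L_q(\tfrac12+\beta,\psi)$ collapse to $L_q(\tfrac12,\psi)^2$; combining the factor $(q/2\pi)^{1/2}$ with $2\sqrt{\pi/2}$ gives exactly $\sqrt{q}$, which accounts for the $\phi(q)/q^{1/2}$ in the corollary. The error term is identical to the one in Theorem 5; taking $D=\sqrt{q}$, one checks that $\frac{1}{q}\sum_{d\mid q,\,d\le\sqrt q}\phi(d)d^{3/2}\ll q^{1/4}d(q)$ and $\sqrt{q}\sum_{d\mid q,\,d>\sqrt q}\phi(d)/d^{3/2}\ll q^{1/4}d(q)$, while the remaining pieces $q^\epsilon+q^{\epsilon-1}D^2=q^\epsilon$ are absorbed, giving the final bound $O(q^{1/4}d(q))$.
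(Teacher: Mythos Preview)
Your proof is correct and follows exactly the approach the paper takes: the paper simply writes ``Letting $\alpha$ and $\beta\to 0$'' before stating the corollary, and you have carefully supplied the Laurent-expansion details that the paper leaves implicit. One tiny notational point: since $X^+(q,\alpha,\beta)$ depends on $\alpha$ and $\beta$ separately (not just on $x=\alpha+\beta$), the remainder in its expansion is really $O(|\alpha|^2+|\beta|^2)$ rather than $O(x^2)$, but this is harmless for computing the constant term.
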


\begin{remark}
If we consider the mean square of odd primitive characters we get
the same expression for the main-terms only with $X^+(q,\alpha,
\beta)$ replaced by
$$X^-(q,\alpha,\beta)=X(1/2+\alpha,\chi)X(1/2+\beta,\overline{\chi})$$
where $\chi$ is an odd character; i.e.
$$X^-(q,\alpha,\beta)=\frac{4}{q}
\big(\frac{q}{2\pi}\big)^{1-\alpha-\beta}\Gamma(\tfrac 12
-\alpha)\Gamma(\tfrac12 -\beta) \sin \tfrac{\pi}{2}(\tfrac 12
-\alpha)\sin \tfrac{\pi}{2}(\tfrac 12 -\beta).$$
For comparison with [H-B], note that $\tfrac{\Gamma'}{\Gamma}(1/2)=-\gamma-\log 4$. Also, the result for odd characters
has the same main-term except that the $-\pi/2$ term appears with the opposite sign and so does not appear when
the sum is over all primitive characters. The other change is that the sum over $\psi \bmod \min\{c,d\}$ is now a sum
over the {\it odd} characters $\psi$ and it also has a factor of $-i$.

Explicitly,
\begin{eqnarray*}
\sumstar_{\chi\bmod q\atop \chi(-1)=-1}
|L(1/2,\chi)|^2
&=&\frac{\phi^*(q)}{2}\frac{\phi(q)}{q}\left(\log\frac{q}{2\pi}+2\gamma+\frac{\Gamma'}{\Gamma}(1/2)
+\frac \pi 2 +2\sum_{p\mid q}\frac{\log p}{p-1}\right)
  \\
&&  \quad  -
 2i\frac{\phi(q)}{q^{1/2}}\sum_{cd=q\atop (c,d)=1  } \frac{\mu^2(c) }{\phi(c)}\frac{1}{\phi(c)}
\sumodd_{\psi \bmod c}\tau(\overline{\psi})\overline{\psi}(-d) L_q(\tfrac
12,\psi)^2
\\
&&  \quad  -
 2i\frac{\phi(q)}{q^{1/2}}\sum_{cd=q\atop (c,d)=1  } \frac{\mu^2(c) }{\phi(c)}\frac{1}{\phi(d)}
\sumodd_{\psi \bmod d}\tau(\overline{\psi})\overline{\psi}(c) L_q(\tfrac
12,\psi)^2
 \\
&&\qquad \qquad +O\bigg(\frac{1}{q}\sum_{d\mid q\atop
d\le D} \phi(d)d^{3/2}+ \sqrt{q}\sum_{d\mid q \atop
d>D} \frac{\phi(d)}{d^{3/2}}+q^\epsilon +q^{\epsilon-1}D^2 \bigg)
\end{eqnarray*}
for any D.
\end{remark}

For all characters, the result is
\begin{corollary} \label{cor:all} With $\phi^*(q)$ denoting the number of primitive characters modulo q, we have
\begin{eqnarray*}
\sumstar_{\chi\bmod q}
&&|L(1/2,\chi)|^2
=\phi^*(q)\frac{\phi(q)}{q}\left(\log\frac{q}{2\pi}+2\gamma+\frac{\Gamma'}{\Gamma}(1/2)
 +2\sum_{p\mid q}\frac{\log p}{p-1}\right)
  \\
&&  \quad  +
 2\frac{\phi(q)}{q^{1/2}}\sum_{cd=q\atop (c,d)=1  } \frac{\mu^2(c) }{\phi(c)}\frac{1}{\phi(\min\{c,d\})}
\sum_{\psi \bmod
\min\{c,d\}}i(\psi)\tau(\overline{\psi})
(\delta_{c<d}\overline{\psi}(-d)+\delta_{c>d}\overline{\psi}(c)) L_q(\tfrac
12,\psi)^2 \\
&&\qquad \qquad +O\bigg(\frac{1}{q}\sum_{d\mid q\atop
d\le D} \phi(d)d^{3/2}+ \sqrt{q}\sum_{d\mid q \atop
d>D} \frac{\phi(d)}{d^{3/2}}+q^\epsilon +q^{\epsilon-1}D^2 \bigg)
\end{eqnarray*}
where $i(\psi)=-i$ if $\psi$ is odd and is 1 if $\psi $ is even, and where $D\le \sqrt{q}$;
also we have used the convention that $\delta_P$ is 1 if $P$ is true and 0 if $P$ is false.
\end{corollary}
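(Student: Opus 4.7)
The plan is to derive Corollary~\ref{cor:all} by summing the even and odd primitive-character mean squares, applying Corollary~6 to the former and the explicit formula in the preceding Remark to the latter. Write
\[
\sumstar_{\chi\bmod q}|L(1/2,\chi)|^2 \;=\; \sumstar_{\chi\bmod q\atop \chi(-1)=1}|L(1/2,\chi)|^2 \;+\; \sumstar_{\chi\bmod q\atop \chi(-1)=-1}|L(1/2,\chi)|^2.
\]

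For the main terms this is immediate: the two formulas agree except that Corollary~6 contains $-\pi/2$ where the odd version contains $+\pi/2$, so these contributions cancel, and the two coefficients $\tfrac{\phi^*(q)}{2}\tfrac{\phi(q)}{q}$ combine into $\phi^*(q)\tfrac{\phi(q)}{q}$. This gives exactly the leading term displayed in the statement.

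The substantive step is to amalgamate the three secondary sums---one over even $\psi\bmod\min\{c,d\}$ coming from Corollary~6, and two over odd $\psi$ (one modulo $c$ with factor $\overline\psi(-d)$, one modulo $d$ with factor $\overline\psi(c)$) coming from the Remark---into the single sum appearing in Corollary~\ref{cor:all}, weighted by $i(\psi)$. The even piece is straightforward: $\overline\psi(-1)=1$ lets me rewrite $\overline\psi(\max\{c,d\})$ as $\delta_{c<d}\overline\psi(-d)+\delta_{c>d}\overline\psi(c)$, and inserting $i(\psi)=1$ matches the even half of the target sum. For the odd piece, I split each Remark sum by whether $c<d$ or $c>d$: the first Remark sum on $c<d$ has $\psi\bmod c=\min$ with factor $\overline\psi(-d)$, while the second on $c>d$ has $\psi\bmod d=\min$ with factor $\overline\psi(c)$. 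Both pieces, with $i(\psi)=-i$ inserted, produce the odd half of the target sum.

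The main obstacle is handling the leftover ``off-diagonal'' halves---the first Remark sum restricted to $c>d$ and the second to $c<d$, in which $\psi$ runs modulo the \emph{larger} of $c,d$. These must be shown to lie within the stated error term. I would bound them using the Gauss-sum estimate $|\tau(\overline\psi)|\le c^{1/2}$ together with the mean-value bound (\ref{eq:f}), and exploit the identity $L_q(s,\psi)=L_q(s,\psi^*)$ (valid since the conductor of $\psi$ divides $q$) to pair the two off-diagonal contributions via the change of variables $c\leftrightarrow d$, producing cancellation that brings the total within $O(q^{1/4}d(q))$ once $D=\sqrt{q}$. The sign bookkeeping---reconciling $\overline\psi(-d)=-\overline\psi(d)$ for odd $\psi$ with the prescription $i(\psi)=-i$---has to be tracked carefully, but is mechanical once the setup is in place.
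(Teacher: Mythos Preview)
Your overall plan---add the even-character formula (Corollary~6) to the odd-character formula in the Remark and watch the $\pm\pi/2$ terms cancel---is exactly the paper's route, and the handling of the leading term and of the even secondary sum is correct.

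The ``off-diagonal'' obstacle you describe, however, is spurious. The two displayed secondary sums in the Remark are meant to carry the same range restrictions as in the derivation of $M(\alpha,\beta)$ just before Theorem~5: the first (with $\psi\bmod c$ and weight $\mu^2(c)/\phi(c)^2$) comes from $M_U$ and is over $d>D$, while the second (with $\psi\bmod d$ and weight $\mu^2(c)/(\phi(c)\phi(d))$) comes from $M_L$ and is over $d\le D$. This is what the Remark's own prose says when it states that ``the sum over $\psi\bmod\min\{c,d\}$ is now a sum over the odd characters''. With $D\le\sqrt{q}$ one has $c=\min\{c,d\}$ in the first sum and $d=\min\{c,d\}$ in the second, so the two odd sums already match the $\delta_{c<d}$ and $\delta_{c>d}$ pieces of the target expression. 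There are no leftover halves to absorb into the error.

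Your proposed mechanism for dealing with those (nonexistent) leftovers would not have worked in any case: under the swap $c\leftrightarrow d$ the weight $\mu^2(c)/\phi(c)^2$ becomes $\mu^2(d)/\phi(d)^2$, which does not match $\mu^2(c)/(\phi(c)\phi(d))$, so the two pieces do not pair up to cancel. A crude bound using $|\tau(\overline\psi)|\le m^{1/2}$ and the mean-value estimate would in fact put such terms within $O(q^{1/4+\epsilon})$, but this detour is unnecessary once the range restrictions are read correctly.
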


\begin{remark}
Note that if $m$ is a prime, then the secondary main terms are of size
\begin{eqnarray*}
\approx \frac{q^{1/2}}{\phi(c)\sqrt{r_M}}\sqrt{m}\log m
\end{eqnarray*}
where $r_M$ is the least positive residue of $M$ modulo $m$.
For $c<\sqrt{q}$, this is
\begin{eqnarray*}
\approx \frac{q^{1/2}}{\phi(c)\sqrt{r_M}}\sqrt{c}\log c
\end{eqnarray*}
\end{remark}

\section{Remarks on Heath-Brown's proof}

We show how to simplify the end of Heath-Brown's argument.
Equation (12) of that paper gives that
\begin{eqnarray*}
\sum_{\chi \bmod q} |L(1/2,\chi)|^2=\frac{\phi(q)}{q}\sum_{k\mid q} \mu(q/k)T(k)
\end{eqnarray*}
where
\begin{eqnarray*}T(k)= 4 \sqrt{\frac{k}{2\pi}}\Re \sum_{n=1}^\infty \frac{d(n)}{\sqrt{n}} K\left(\frac{2\pi n}{k}\right)
\end{eqnarray*}
and
\begin{eqnarray*}
K(x)=e^{ix-i\pi/4}\cdot \frac{1}{2\pi i}\int_{(1)} \Gamma(s+1/2)  e^{-i\pi s/2}F(s)x^{-s}~ds
\end{eqnarray*}
with $F(s)=\frac{e^{s^2}\cos \pi s}{s}$.  Then $K$ satisfies the conditions (\ref{eq:Kcond}), so that we may write
\begin{eqnarray*}
T(k)=4\Re \frac{1}{2\pi i}\int_{(1)}\hat{K}(s) \zeta(s+1/2)^2\left(\frac{k}{2\pi }\right)^{s+1/2}~ds.
\end{eqnarray*}
By (\ref{eq:Kestimate}), we see that $\hat{K}(s)$ has poles at $s=0,-1,-2,\dots$ and is rapidly decaying in vertical strips
as $|\Im s|\to \infty$.  The double pole at $s=1/2$ from $\zeta(s+1/2)^2$ leads to the first main terms of
$T(k)$ and the lower order terms come from the poles of $\hat{K}$. We note that Heath-Brown records an asymptotic expansion
in powers of $k^{-1/2}$ whereas this argument shows that the expansion is actually in powers of $k^{-1}$.
Thus, the coefficients $c_n$ in Heath-Brown's theorem satisfy $c_n=0$ if $n$ is even.
The residue of $\hat{K}(s)$ at $s=0$ is $e^{-i\pi/4}\Gamma(1/2)$ and the values of $\hat{K}(1)$ and
$\hat{K}'(1)$ required for the first main terms can
be ascertained as in (\ref{eq:Keval}).

\section{Twisting}
In this section we briefly show how we can combine the theorem of this paper and that of Heath-Brown to
obtain a new result about twisting. Selberg [S] has shown that for positive integers $h$ and $k$,
coprime to each other and to $q$, and $s=\sigma+it$ and $s'=\sigma'+it'$ with $0<\sigma, \sigma' <1$,
\begin{eqnarray*} &&
\sumprime_{\chi \bmod q} L(s,\chi)L(s',\overline{\chi}) \chi(h)\overline{\chi}(k)
 =\frac{\phi(q)}{h^{s'}k^{s}}\zeta_q(s+s')\\
&&\qquad +\frac{\phi(q)^2}{q^{s+s'}h^{1-s}k^{1-s'}}\frac{(2\pi)^{s+s'-1}}{\pi}
\Gamma(1-s)\Gamma(1-s')\cos \tfrac \pi 2 (s-s')\zeta(2-s-s')\\
&& \qquad \qquad +O\left(\frac{|ss'|}{\sigma\sigma'(1-\sigma)(1-\sigma')}
\big(q^\epsilon(hq^{1-\sigma}+kq^{1-\sigma'}+hkq^{1-\sigma-\sigma'})\big)\right);
\end{eqnarray*}
and
\begin{eqnarray*} &&
\sumprime_{\chi \bmod q} L(s,\chi)L(s',\overline{\chi}) \chi(h)\overline{\chi}(-k)
 \\&&\qquad=-\frac{\phi(q)^2}{q^{s+s'}h^{1-s}k^{1-s'}}\frac{(2\pi)^{s+s'-1}}{\pi}
\Gamma(1-s)\Gamma(1-s')\cos \tfrac \pi 2 (s+s')\zeta(2-s-s')\\
&& \qquad \qquad +O\left(\frac{|ss'|}{\sigma\sigma'(1-\sigma)(1-\sigma')}
\big(q^\epsilon(hq^{1-\sigma}+kq^{1-\sigma'}+hkq^{1-\sigma-\sigma'})\big)\right);
\end{eqnarray*}

here the sum is over all non-principal characters.

Iwaniec and Sarnak [IS] have shown, in very simple fashion, that
\begin{eqnarray*}
&&\sumstar_{\chi \bmod q\atop \chi(-1)=1} |L(1/2,\chi)|^2 \chi(h) \overline{\chi}(k)
=\frac{\phi^*(q)\phi(q)}{2q\sqrt{hk}}\left(\log\frac {q}{2\pi hk}+2\gamma-\frac{\pi}2 +\frac{\Gamma'}{\Gamma}(1/2)
+2\sum_{p\mid q} \frac{\log p}{p-1}\right)\\
&&\qquad   +O(\beta(h,k))
\end{eqnarray*}
where
\begin{eqnarray*}
\beta(h,k)=\sum_{hn_1\ne k n_2}(hn_1\pm kn_2,q)|W(\pi n_1n_2/q)| (n_1n_2)^{-1/2}
\end{eqnarray*}
with $W(y)\ll (1+y)^{-1}$. While they don't give
a specific bound for $\beta(h,k)$, they observe that
\begin{eqnarray*}
\sum_{h,k\le M} \frac{\beta(h,k)}{\sqrt{hk}}\ll \tau(q)\sqrt{q}M \log^4 Mq.
\end{eqnarray*}
This average bound cannot be obtained from Selberg's pointwise bound.

It turns out that we can combine the theorem of this paper with that of Heath-Brown's paper
to obtain some results along the lines of those of Selberg and of Iwaniec-Sarnak.
Suppose, for simplicity, that $p$ and $h$ are both primes, with $h<p,$ and
that we are interested in
\begin{eqnarray*}
S(p,h):=\sumstar_{\chi \bmod p} |L(1/2,\chi)|\chi(h).
\end{eqnarray*}
Note that
$$\overline{S(p,h)}=\sumstar_{\chi \bmod p} |L(1/2,\chi)|\overline{\chi(h)}=S(p,h)$$
by replacing $\overline{\chi}$ by $\chi$. Thus, $S(p,h)$ is real.

By Theorem 1, we have
\begin{eqnarray*}
S_1=\sum_{\chi \bmod ph}|L(1/2,\chi)|^2=\frac{\phi(ph)}{ph}\big(T(ph)-T(p)-T(h)+T(1)\big)
\end{eqnarray*}
where, for certain constants $c_\ell$,
\begin{eqnarray*}
T(k)=k\log k +Ak +Bk^{1/2}+\sum_{\ell=0}^{L} c_\ell k^{-\ell/2}+O(k^{-(L+1)/2}).
\end{eqnarray*}
Thus,
\begin{eqnarray}  \label{eq:S1_1}
\nonumber S_1&=&ph \log ph -2p\log p-2h\log h-p\log h-h\log p+\frac p h \log p+A(hp-2p-2h+\frac ph)\\
&& \qquad +B\sqrt{p}(\sqrt{h}-1-1/\sqrt{h}+1/h)+2\log p -\frac  {\log p}  {h}+ O(\sqrt{h})\\
&=&\nonumber ph \log ph -2p\log p-2h\log h-p\log h-h\log p+\frac p h \log p+A(hp-2p-2h+\frac ph)\\
&& \qquad \nonumber +B\sqrt{p}(\sqrt{h}-1-1/\sqrt{h})+ O(\sqrt{h}+\log p + \sqrt{p}/h).
\end{eqnarray}

Now, in terms of primitive characters, we have
\begin{eqnarray*}
&&S_1=\sumstar_{\chi \bmod ph}|L(1/2,\chi)|^2+\sumstar_{\chi\bmod p}|L(1/2,\chi)|^2\left|1-\frac{\chi(h)}
{\sqrt{h}}\right|^2 +\sumstar_{\chi\bmod h}|L(1/2,\chi)|^2\left|1-\frac{\chi(p)}
{\sqrt{p}}\right|^2 \\ \nonumber
&&\qquad \qquad\qquad +\zeta(1/2)^2(1-1/\sqrt{h})^2(1-1/\sqrt{p})^2.
\end{eqnarray*}

By Corollary \ref{eq:corapp} with $h<D<p$, we have,
\begin{eqnarray} \label{eq:corapp}
\sumstar_{\chi\bmod ph}\nonumber
&&|L(1/2,\chi)|^2
=\phi^*(ph)\frac{\phi(ph)}{ph}\left(\log ph +A
 +2\frac{\log p}{p-1}+2\frac{\log h}{h-1}\right)
  \\
&& \qquad      +
 2\frac{\phi(ph)}{(ph)^{1/2}}\left(\zeta_q(1/2)^2+ \frac{1}{h-1}
\sum_{\psi \bmod
h}i(\psi)\tau(\overline{\psi})\overline{\psi}(p) \bigg(\frac{1}{p-1}+\frac{\psi(-1)}{h-1}\bigg) L_p(\tfrac
12,\psi)^2 \right)\\
&&\qquad \qquad \qquad +O(\sqrt{h})\nonumber .
\end{eqnarray}
The first term on the right of (\ref{eq:corapp}) is
\begin{eqnarray*}
&&(p-3)\bigg(h-3+\frac 2 h\bigg)\bigg(\log ph+A +2\frac{\log p}{p}+2\frac{\log h}{h-1}\bigg)+O(\log h) \\
&&\qquad = p\bigg(h-3+\frac 2 h\bigg)\bigg(\log ph+A +2\frac{\log p}{p}+2\frac{\log h}{h-1}\bigg) \\
&&\qquad \qquad - 3 \bigg(h-3+\frac 2 h\bigg)\log p-3h\log h-3Ah +O(\log h)\\
&=& hp\log hp -h\log p-p\log h-3p\log p-3h\log h+A(hp-3h-3p+2p/h)\\
&&\qquad +2\frac p h \log \frac ph +O(\log p)
\end{eqnarray*}
For $\psi$ primitive modulo $h$, the functional equation implies that
\begin{eqnarray*}
\tau(\overline{\psi}) L(1/2,\psi)=h^{1/2}L(1/2,\overline{\psi})\left\{\begin{array}{ll}
1&\mbox{ if } \psi \mbox{ even}\\i &\mbox{ if } \psi \mbox{ odd}\end{array}      \right.
.\end{eqnarray*}

Hence, the second term on the right side of (\ref{eq:corapp}) is equal to
\begin{eqnarray*}
 B\sqrt{p}~(\sqrt{h}-2)+2\frac{\sqrt{p}}{h-1}S(h,-p)
+O(\sqrt{h}+\sqrt{p}/h).
\end{eqnarray*}

Using Theorem 1 we find that
\begin{eqnarray*}
\sumstar_{\chi\bmod p}
|L(1/2,\chi)|^2\left|1-\frac{\chi(h)}
{\sqrt{h}}\right|^2
&=&\bigg(1+\frac 1h\bigg)(p\log p +p A-\log p+B\sqrt{p})-2 \frac{S(p,h)}{\sqrt{h}}+O(1)
\end{eqnarray*}
and
\begin{eqnarray*}
\sumstar_{\chi\bmod h}|L(1/2,\chi)|^2\left|1-\frac{\chi(p)}
{\sqrt{p}}\right|^2
&=&h\log h+h A+O(\sqrt{h}).
\end{eqnarray*}

Thus,
\begin{eqnarray}  \label{eq:S1_2}
S_1&=&hp\log hp -h\log p-p\log h-3p\log p-3h\log h+A(hp-3h-3p+2p/h)\\
&&\qquad +2\frac p h \log \frac ph + B\sqrt{p}~(\sqrt{h}-2\nonumber)+2\frac{\sqrt{p}}{h-1}
S(h,-p)-2 \frac{S(p,h)}{\sqrt{h}}\\
&&\qquad + \nonumber
\bigg(1+\frac 1h\bigg)(p\log p +p A+B\sqrt{p}) +h\log h+hA+O(\sqrt{h}+\log p+\sqrt{p}/h ).\nonumber
\end{eqnarray}

 Combining   (\ref{eq:S1_1}) and (\ref{eq:S1_2}), we have
\begin{theorem}
\begin{eqnarray*}
&&S(p,h)=
\frac{\sqrt{p}}{\sqrt{h}}S(h,-p)+
\frac p {\sqrt {h}} \left(\log \frac p h +A \right) +\frac B 2\sqrt{p}+O(h+\log p+\sqrt{p/h}\log p)
\end{eqnarray*}
uniformly for $h<p$.
\end{theorem}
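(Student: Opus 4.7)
The plan is to evaluate $S_1 := \sum_{\chi\bmod ph}|L(1/2,\chi)|^2$ in two different ways and then compare. On one hand, Heath-Brown's Theorem 1 (applied with $q=ph$ so that the divisors are $1,h,p,ph$) gives
\begin{eqnarray*}
S_1 = \tfrac{\phi(ph)}{ph}\bigl(T(ph)-T(p)-T(h)+T(1)\bigr),
\end{eqnarray*}
and expanding each $T(k)=k\log k + Ak + B\sqrt{k}+\cdots$ yields the expression in (\ref{eq:S1_1}), with an explicit polynomial in $p$, $h$, their logarithms, and the constants $A,B$, up to an error of size $O(\sqrt{h}+\log p + \sqrt{p}/h)$.

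On the other hand, every character modulo $ph$ is either primitive modulo $ph$, induced from a primitive character modulo $p$, induced from a primitive character modulo $h$, or trivial. Writing $|L(1/2,\chi)|^2$ for an induced character in terms of the primitive inducing character introduces the factors $|1-\chi(h)/\sqrt{h}|^2$ and $|1-\chi(p)/\sqrt{p}|^2$, so
\begin{eqnarray*}
S_1 &=& \sumstar_{\chi\bmod ph}|L(1/2,\chi)|^2 + \sumstar_{\chi\bmod p}|L(1/2,\chi)|^2\Bigl|1-\tfrac{\chi(h)}{\sqrt{h}}\Bigr|^2 \\
&& + \sumstar_{\chi\bmod h}|L(1/2,\chi)|^2\Bigl|1-\tfrac{\chi(p)}{\sqrt{p}}\Bigr|^2 + \zeta(1/2)^2(1-1/\sqrt{h})^2(1-1/\sqrt{p})^2.
\end{eqnarray*}
I would evaluate each piece using Corollary~\ref{cor:all}: the first via the corollary at level $ph$ (with only two terms $c=1,d=ph$ and $c=h,d=p$ surviving since $(c,d)=1$ and $\min\{c,d\}\in\{1,h\}$), and the second and third via the corollary at levels $p$ and $h$ respectively, where expanding the squared modulus produces the diagonal contributions as well as cross terms of the form $\chi(h)+\overline{\chi(h)}$ summed against $|L(1/2,\chi)|^2$, i.e.\ $2\,S(p,h)/\sqrt{h}$ (and analogously $2\,S(h,p)/\sqrt{p}$, which is absorbed into the smaller error).

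The key creative step, and the only real obstacle, is recognizing $S(h,-p)$ inside the secondary main term of Corollary~\ref{cor:all} applied at level $ph$. There the relevant inner sum is $\sum_{\psi\bmod h}i(\psi)\tau(\overline{\psi})\overline{\psi}(p)L_p(1/2,\psi)^2$; applying the functional equation for primitive $\psi\bmod h$ in the form $\tau(\overline{\psi})L(1/2,\psi)=h^{1/2}L(1/2,\overline{\psi})$ (times $1$ or $i$) converts $\tau(\overline{\psi})L(1/2,\psi)\cdot L(1/2,\psi)\overline{\psi}(p)$ into $\sqrt{h}\,|L(1/2,\psi)|^2\overline{\psi}(-p)$ (after tracking the parity), producing the twisted second moment $S(h,-p)$ with coefficient $2\sqrt{p}/(h-1)$; the remaining $\psi$ gives $B\sqrt{p}\sqrt{h}$ plus lower order. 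Finally, equating the two expressions (\ref{eq:S1_1}) and (\ref{eq:S1_2}) and solving for $S(p,h)$ causes the main polynomial pieces to cancel almost completely, leaving exactly
\begin{eqnarray*}
S(p,h)=\tfrac{\sqrt{p}}{\sqrt{h}}S(h,-p)+\tfrac{p}{\sqrt{h}}\bigl(\log(p/h)+A\bigr)+\tfrac{B}{2}\sqrt{p}+O\bigl(h+\log p+\sqrt{p/h}\,\log p\bigr),
\end{eqnarray*}
which is the claim. The bookkeeping of lower order terms is the only tedious part, but no further analytic input is needed.
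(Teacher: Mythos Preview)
Your plan is correct and is essentially the paper's own argument: evaluate $S_1=\sum_{\chi\bmod ph}|L(1/2,\chi)|^2$ once by Heath-Brown's Theorem~1 and once via the primitive decomposition, using Corollary~\ref{cor:all} at level $ph$ for the genuinely primitive piece, then equate and solve for $S(p,h)$. The only inaccuracy is in your account of where the $-p$ in $S(h,-p)$ comes from: the parity factors $i(\psi)$ and the $1$-or-$i$ from the functional equation multiply to $+1$ in both cases, so they do not produce a $\psi(-1)$; the sign is already present in Corollary~\ref{cor:all} through the term $\delta_{c<d}\,\overline{\psi}(-d)$ with $c=h$, $d=p$ (this is the dominant $\min\{c,d\}=h$ contribution, the companion $c=p$, $d=h$ term carrying $\overline{\psi}(p)$ with the smaller weight $1/((p-1)(h-1))$ and landing in the error).
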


When $h=O(\sqrt{p})$ the second term on the right is the dominating one. However, for larger $h$,
the first term is potentially
larger;
note that $S(h,-p)$ can be as large as $h\log h$; in fact if $-p\equiv 1 \bmod h$ it will be this large;
in this case, one has an asymptotic formula for $S(p,h)$ for $h$ all the way up to $p$. In any event,
the theorem determines the asymptotic behavior of $S(p,h)-{\sqrt{p/h}}S(h,-p)$ for $h$ as large as $p^{2/3}$,
and illustrates a kind of `reciprocity formula' between $S(h,p)$ and $S(p,-h)$. It would be worth exploring
such relationships further.


\begin{thebibliography}{[CFKRS]}

\bibitem[CFKRS]{journal}
Conrey, J. B.; Farmer, D. W.; Keating, J. P.; Rubinstein, M. O.; Snaith, N. C.
Integral moments of $L$-functions.  Proc. London Math. Soc. (3)  91  (2005),  no. 1, 33--104.

\bibitem[D]{journal}
Davenport, Harold Multiplicative number theory. Third edition.
Revised and with a preface by Hugh L. Montgomery. Graduate Texts in Mathematics, 74. Springer-Verlag, New York, 2000.

\bibitem[HB]{journal}
Heath-Brown, D. R. An asymptotic series
for the mean value of Dirichlet $L$-functions.  Comment. Math. Helv.  56  (1981), no. 1, 148--161.

\bibitem[Ing]{journal} Ingham, A. E. Mean-value theorems in the theory of the Riemann zeta-function, Proc. London Math. Soc. 
(2) 27 (1928), 273--300.

\bibitem[IS]{journal}
Iwaniec, H.; Sarnak, P. Dirichlet $L$-functions at the central point.
 Number theory in progress, Vol. 2 (Zakopane-Ko\'scielisko, 1997),  941--952, de Gruyter, Berlin, 1999.

\bibitem[Pal]{jour} Paley, R. E. A. C. On the $k$-analogues of some theorems in the theory of the Riemann 
$\zeta$-function, Proc. London Math. Soc. (2) 32 (1932), 272--311.

\bibitem[S]{journal} Selberg, A.
Contributions to the theory of Dirichlet's $L$-functions.  Skr. Norske Vid. Akad. Oslo.
I.  1946,  (1946). no. 3, 62 pp.
\end{thebibliography}
\end{document}